\documentclass[psamsfonts]{conm-p-l}
\usepackage{setspace}
\usepackage{amsfonts}
\usepackage{amsmath}
\usepackage{amssymb}
\usepackage{graphicx}

\mathsurround=2pt
\allowdisplaybreaks[1]
\newcommand{\ds}{\displaystyle}
\newcommand{\ben}{\begin{enumerate}}
\newcommand{\een}{\end{enumerate}}
\newcommand{\tl}{F^{02d}_1}
\newcommand{\eq}[2][label]{\begin{equation}\label{#1}#2\end{equation}}
\newcommand{\p}{\partial}
\newcommand{\pp}{\p\!}
\newcommand{\BMO}{{\rm{BMO}}}

\copyrightinfo{2000}{American Mathematical Society}

\newtheorem{theorem}{Theorem}[section]
\newtheorem{lemma}[theorem]{Lemma}

\theoremstyle{definition}

\theoremstyle{remark}
\newtheorem{remark}[theorem]{Remark}

\numberwithin{equation}{section}



\begin{document}

\title{Bellman Function and the $H^1-\BMO$ Duality}

\author{Leonid Slavin}
\address{Department of Mathematics, University of Connecticut, Storrs, Connecticut 06269}
\email{slavin@math.uconn.edu}

\author{Alexander Volberg}
\address{Department of Mathematics, Michigan State University,
East Lansing, Michigan 48824}
\email{volberg@math.msu.edu}
\thanks{Volberg's research supported in part by the National Science Foundation grant DMS-0501067.}

\subjclass[2000]{Primary 42B30, 42B35}


\keywords{Bellman function method, $H^1-\BMO$ duality}

\begin{abstract}
A Bellman function approach to Fefferman's $H^1-\BMO$ duality theorem is presented. One Bellman-type argument is used to handle two different one-dimensional cases, dyadic and continuous. An explicit estimate for the constant of embedding $\BMO\subset (H^1)^*$ is given in the dyadic case. The same Bellman function is then used to establish a multi-dimensional analog.
\end{abstract}

\maketitle

\section*{Introduction} 
The emergence in the past decade of the Bellman function method as a powerful and versatile harmonic analysis technique has been characterized by rapid theoretical development on the one hand and somewhat {\it ad hoc}, if effective, approaches to some problems on the other. From the groundbreaking applications in \cite{haar,nt,haar1}, which put the method on the map, to the concerted effort at tracing its origin to stochastic control and building a library of results in \cite{vol, vol1} (see also multiple references therein; in addition, in \cite{vol} an earlier result of Burkholder \cite{burkholder} was put in a Bellman-function framework), to recent explicit computation of actual Bellman functions (and not just their majorants) in \cite{vasyunin, melas, vv, sv} -- the technique has been established as one with many appearances and broad applicability.

In this paper, we seek to reinforce this notion by using the Bellman function method in an unusual setting. Namely, we prove one, the more technically involved, direction of the famous Fefferman $H^1-\BMO$ duality theorem (\cite{fefferman}). The proofs we present are Bellman-function-type proofs (see the discussion in \cite{s}), whereas no extremal problem is posed and thus no Bellman function as such exists. Nonetheless, the main feature of any Bellman-function proof, an induction-by-scales argument, is central to our reasoning. (In Bellman-type arguments, the function on which the induction by scales is performed is commonly referred to as the Bellman function.) It is also worth noting that Bellman proofs often yield explicit (even sharp) constants in inequalities, one reason many well-known results have been reexamined recently with the use of the technique.    

We first consider two cases, dyadic and continuous, in the one-dimensional setting. In the dyadic case, we show that $\BMO^d\subset\left(\tl\right)^*$ (with an explicit estimate for the constant of embedding), with the Triebel-Lizorkin space $\tl$ giving a convenient characterization for $H^1_d(\mathbb{T}),$ the dyadic version of $H^1(\mathbb{T}).$ A simple argument demonstrates the converse inclusion. In the continuous case, we establish the fact that $\ds BMO_0(\mathbb{T})\subset H^1(\mathbb{T})^{*}$ ($\ds BMO_0(\mathbb{T})=\{\varphi\in \BMO(\mathbb{T}),\varphi(0)=0\},$ and as usual, $\varphi(z)$ is the harmonic continuation of $\varphi$ into $\mathbb{D}$). The key to the proofs is a lemma whose hypotheses include the existence of a certain function, one we will call {\it the} Bellman function, slightly abusing the language, since we make no claim as to its uniqueness. We then generalize the continuous-case proof to show that $\BMO(\mathbb{R}^n)\subset \left(\mathcal{H}^1(\mathbb{R}^n)\right)^*.$ This, notably, requires no new tools (except for the natural reformulation of the key lemma in higher-dimensional terms) -- we even use the same Bellman function. Furthermore, we again get an explicit estimate for the constant of embedding. We start by stating our key lemma in the case of an interval-based dyadic lattice. Although, formally, it is a special case of the higher-dimensional lemma, the latter is just its minor modification. 
\section{The formulation of Key Lemma 1.} 
Let $D=D_{I_0}$ be the dyadic lattice rooted in an interval $I_0.$ For an interval $I \in D,$ let $I_-$ and $I_+$ be its left and right halves, respectively.  Consider two functions, $S: D\to [0,\infty)$ and $M: D\to [0,\bar{M}],$ such that 
\eq[d01]{
S_{I_-}=S_{I_+}\ge S_I~~~~\text{and}~~~~ 
M_I\ge \frac12(M_{I_-}+M_{I_+}),\forall I\in D. 
}
\begin{lemma}[Key Lemma 1]Let $S$ and $M$ be as above. Assume there exists a $C^2\!\!$-function $B: [0,\infty)\times [0,\bar{M}]\to \mathbb{R}$ (except, possibly, that $B_x$ or $B_{xx}$ may fail to exist when $x=0$), satisfying
\eq[d02]{
0\le B(x,y)\le 2\bar{M}\sqrt{x},~~~
-\frac{\pp B}{\p x}\frac{\pp B}{\p y}\ge \frac{\bar{M}}2,~~~
\frac{\p^2\! B}{\p x^2}\le 0,~~~\frac{\p^2\! B}{\p y^2}\ge 0,~~~B(0,y)=0.
}
Then, for any positive integer $n,$
\eq[d03]{
\sum_
{\substack{ 
J\in D\\
|J|\ge2^{{-n+1}} 
}}
|J|\sqrt{(S_{J_+}-S_J)\left(M_J-\frac12(M_{J_-}+M_{J_+})\right)}
\le
\sqrt{2\bar{M}}~2^{-n}
\sum_
{\substack{ 
J\in D\\
|J|=2^{{-n}} 
}}
\sqrt{S_J}.
}
\end{lemma}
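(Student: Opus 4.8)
The plan is to run an induction by scales on the Bellman sums
\[
\mathfrak B_k\ :=\ \sum_{\substack{J\in D\\ |J|=2^{-k}}}|J|\,B(S_J,M_J),\qquad k=0,1,\dots,n,
\]
comparing neighboring generations. Since every $J$ with $|J|=2^{-k-1}$ is the left or the right half of a unique $I$ with $|I|=2^{-k}$, and $|I_\pm|=\tfrac12|I|$, one has
\[
\mathfrak B_{k+1}-\mathfrak B_k=\sum_{\substack{I\in D\\ |I|=2^{-k}}}|I|\,\Delta_I,\qquad
\Delta_I:=\tfrac12\bigl(B(S_{I_-},M_{I_-})+B(S_{I_+},M_{I_+})\bigr)-B(S_I,M_I).
\]
Thus the lemma reduces to the pointwise estimate
\[
\Delta_I\ \ge\ \sqrt{2\bar M}\,\sqrt{(S_{I_+}-S_I)(M_I-\tfrac12(M_{I_-}+M_{I_+}))}\qquad(\star)
\]
for every $I\in D$: summing $(\star)$ over the generation $|I|=2^{-k}$ and then over $k=0,\dots,n-1$ telescopes the left side to $\mathfrak B_n-\mathfrak B_0$ and produces on the right exactly $\sqrt{2\bar M}$ times the left-hand side of \eqref{d03}. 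Before attacking $(\star)$ I would record the sign information hidden in the hypotheses: a nonnegative concave function of $x$ on $[0,\infty)$ vanishing at $0$ is automatically nondecreasing, so $B_x\ge0$; together with $-B_xB_y\ge\bar M/2>0$ this forces $B_x>0$ and $B_y<0$ wherever the derivatives exist.

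For $(\star)$, write $x:=S_I$, $x':=S_{I_-}=S_{I_+}\ (\ge x)$ and $\bar y:=\tfrac12(M_{I_-}+M_{I_+})\ (\le M_I=:y)$, both facts coming from \eqref{d01}. Since $M_{I_-},M_{I_+}\in[0,\bar M]$, the point $(x',\bar y)$ and the whole segment joining $(x,y)$ to $(x',\bar y)$ lie in the domain of $B$. The two children $(S_{I_\pm},M_{I_\pm})$ share the first coordinate $x'$, so convexity of $B$ in $y$ ($B_{yy}\ge0$) gives $\tfrac12(B(S_{I_-},M_{I_-})+B(S_{I_+},M_{I_+}))\ge B(x',\bar y)$, whence $\Delta_I\ge B(x',\bar y)-B(x,y)$. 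Parametrizing that segment by $\gamma(t)=\bigl(x+t(x'-x),\,y+t(\bar y-y)\bigr)$, $t\in[0,1]$, and integrating $\tfrac{d}{dt}B(\gamma(t))$,
\[
B(x',\bar y)-B(x,y)=(x'-x)\int_0^1 B_x(\gamma(t))\,dt+(y-\bar y)\int_0^1\bigl(-B_y(\gamma(t))\bigr)\,dt .
\]
Both summands are nonnegative, so by the arithmetic--geometric mean inequality the right-hand side is at least $2\sqrt{(x'-x)(y-\bar y)}\bigl(\int_0^1 B_x(\gamma)\,dt\bigr)^{1/2}\bigl(\int_0^1(-B_y(\gamma))\,dt\bigr)^{1/2}$, and by the Cauchy--Schwarz inequality together with the pointwise bound $-B_xB_y\ge\bar M/2$ along $\gamma$,
\[
\left(\int_0^1 B_x(\gamma)\,dt\right)\!\left(\int_0^1(-B_y(\gamma))\,dt\right)\ \ge\ \left(\int_0^1\sqrt{-B_x(\gamma)B_y(\gamma)}\;dt\right)^{\!2}\ \ge\ \frac{\bar M}{2}.
\]
Since $2\sqrt{\bar M/2}=\sqrt{2\bar M}$ and $(x'-x)(y-\bar y)=(S_{I_+}-S_I)(M_I-\tfrac12(M_{I_-}+M_{I_+}))$, this is exactly $(\star)$.

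Finally I would assemble the pieces: telescoping $(\star)$ as above gives $\mathfrak B_n-\mathfrak B_0\ \ge\ \sqrt{2\bar M}$ times the left-hand side of \eqref{d03}; discarding the nonnegative term $\mathfrak B_0$ and using $B(S_J,M_J)\le2\bar M\sqrt{S_J}$ at the bottom generation gives $\mathfrak B_n\le 2\bar M\,2^{-n}\sum_{|J|=2^{-n}}\sqrt{S_J}$, so dividing by $\sqrt{2\bar M}$ (and using once more $2\bar M/\sqrt{2\bar M}=\sqrt{2\bar M}$) produces \eqref{d03}. The step I expect to be the main obstacle is the pointwise inequality $(\star)$: the naive approach of estimating $\Delta_I$ by two mean value theorems and AM--GM evaluates $B_x$ and $B_y$ at different points and so cannot invoke the hypothesis $-B_xB_y\ge\bar M/2$; routing the bound through a single line integral and Cauchy--Schwarz is precisely what makes that hypothesis usable. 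A minor wrinkle is that $B_x$ may fail to exist at $x=0$: when $S_I=0$ the estimate $(\star)$ is either trivial (if also $S_{I_+}=0$, both sides vanish) or obtained by starting the integration at $t=\varepsilon$ and letting $\varepsilon\to0^+$, which is legitimate because $B(0,\cdot)=0$, $B$ is continuous, and the bound $-B_xB_y\ge\bar M/2$ persists up to $x=0$.
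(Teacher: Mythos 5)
Your proof is correct, and the overall skeleton (reduce to a one-generation inequality, telescope over scales, bound $B$ from above by $2\bar M\sqrt{x}$ at the finest generation and from below by $0$ at the root) is exactly the paper's. Where you genuinely diverge is in the proof of the key one-step estimate $(\star)$. The paper also begins with $B_{yy}\ge0$ to average the two children down to $B(S_0,\tfrac12(M_-+M_+))$, but then applies the mean value theorem separately in each variable, producing $B_S(\hat S,M)$ and $-B_M(S_0,\hat M)$ at \emph{different} points; it resolves the very obstacle you identify by invoking $B_{SS}\le0$ and $B_{MM}\ge0$ a second time to push both evaluations to the common corner $(S_0,M)$, where $-B_SB_M\ge\bar M/2$ applies, and only then uses AM--GM. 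You instead write the increment as a line integral along the segment from $(S_J,M_J)$ to $(S_{J_+},\tfrac12(M_{J_-}+M_{J_+}))$ and combine AM--GM with Cauchy--Schwarz, so that the hypothesis $-B_xB_y\ge\bar M/2$ is used pointwise along the segment. Your route is slightly more robust: it needs $B_{xx}\le0$ only to deduce $B_x\ge0$ (hence $B_y\le0$), not to compare derivative values at different points, so it would survive with weaker hypotheses (e.g.\ monotonicity in $x$ in place of concavity); the paper's route is marginally more elementary in that it avoids integrals and the attendant care at $x=0$, which you handle correctly by truncating at $t=\varepsilon$ and using $B(0,\cdot)=0$. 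Your constant bookkeeping ($2\bar M/\sqrt{2\bar M}=\sqrt{2\bar M}$) matches the paper's.
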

We will prove the lemma and demonstrate our Bellman function later. For now, we will establish the main results.
\section{The dyadic case} 
Consider the dyadic lattice $D=D_\mathbb{T}$ on $\mathbb{T}.$ For an arc $I\in D,$ let $I_-$ and $I_+$ be its left and right halves, respectively. Also, for a function $f\in L^1(\mathbb{T})$ and $I\in D,$ let $\langle f\rangle_I=\frac1{|I|}\int_If(\theta)\,d\theta.$  Let $\ds\tl$ be the dyadic Triebel-Lizorkin space
\eq[d08]{
\tl=\left\{f\in L^1:~\int_\mathbb{T}\left(\sum_{I\ni \theta;I\in D}\left(\left<f\right>_{I_+}-\left<f\right>_{I_-}\right)^2\right)^{1/2}d\theta<\infty \right\}
} 
with the norm
$$
\|f\|_{\tl}=\int_\mathbb{T}\left(\sum_{I\ni \theta;I\in D}\left(\left<f\right>_{I_+}-\left<f\right>_{I_-}\right)^2\right)^{1/2}d\theta.
$$
We introduce the $L^2\!\!$-based $\BMO^d(\mathbb{T})$ (different from, but equivalent to, the original definition in \cite{jn})
\eq[t1]{
\BMO^d=\left\{\varphi\in L^2: \int_J|\varphi(t)-\left<\varphi\right>_J|^2dt\le C^2|J|, \forall~J\in D\right\}
}
with the best such $C$ being the corresponding norm of $\varphi.$ This definition can be rewritten in a more useful form
\eq[t2]{
\BMO^d=\left\{\varphi\in L^2: \langle\varphi^2\rangle_J-\langle\varphi\rangle^2_J\le C^2, \forall J\in D\right\}.
}
Definition (\ref{t2}) proved extremely useful in \cite{sv}, but for the purposes of this paper we refashion it in terms of the Haar coefficients of $\varphi.$ Namely, we have
\eq[d06]{
\BMO^d=\left\{\varphi\in L^1:
~\sup_{J\in D}\frac1{|J|}\sum_{I\in D;I\subset J}\left(\left<\varphi\right>_{I_+}-\left<\varphi\right>_{I_-}\right)^2|I|<\infty\right\}
} 
with the norm
$$
\| \varphi\|_{\BMO^d}=\sup_{J\in D}\left(\frac1{|J|}\sum_{I\in D;I\subset J}\left(\left<\varphi\right>_{I_+}-\left<\varphi\right>_{I_-}\right)^2|I|\right)^{1/2}.
$$
To see the equivalence of the definitions (\ref{t1}) and (\ref{d06}), recall the Haar system: for every dyadic arc $I,$ let
$$
h_I=\left\{
\begin{array}{ll}
\phantom{-}\dfrac1{\sqrt{|I|}}&\text{ on }I_-\\
-\dfrac1{\sqrt{|I|}}&\text{ on }I_+\\
~~~0&\text{ elsewhere}
\end{array}.\right.
$$
It is easy to check that $\{h_I\}_{I\in D;I\subset J}$ form an orthonormal system in $L^2_0(J)=\{f\in L^2(J):\int_Jf(\theta)\,d\theta=0\},$ for any $J\in D;$ what is more, the Haar system actually is a basis for $L^2_0(J).$ For any function $f\in L^1$ and every $I\in D$ one can compute the corresponding Haar
coefficient, 
$$
(f,h_I)=\frac{\sqrt{|I|}}2\left(\left<f\right>_{I_-}-\left<f\right>_{I_+}\right).
$$
For $f\in L^2(J)$ we then have $f-\langle f\rangle_J=\sum_{I\in D;I\subset J}(f,h_I)h_I$ and 
$$
\|f-\langle f\rangle_J\|^2_{L^2}=\sum_{I\in D;I\subset J}(f,h_I)^2=\sum_{I\in D;I\subset J}\frac{|I|}4\left(\left<f\right>_{I_-}-\left<f\right>_{I_+}\right)^2.
$$
We state our main result.
\begin{theorem}
$\ds \BMO^d=\left(\tl\right)^*.$  
\end{theorem}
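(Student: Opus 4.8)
The plan is to prove the two inclusions $\BMO^d\subseteq\left(\tl\right)^*$ and $\left(\tl\right)^*\subseteq\BMO^d$ with comparable norms, where a function $\varphi\in\BMO^d$ acts on $f\in\tl$ through the Haar pairing $\langle f,\varphi\rangle:=\sum_{J\in D}(f,h_J)(\varphi,h_J)$; throughout one works modulo constants (equivalently, with mean-zero representatives), which is harmless since both norms annihilate constants. The first inclusion carries all the weight and is where Key Lemma~1 enters (applied with $I_0=\mathbb{T}$, arcs playing the role of intervals); the converse is routine Hilbert-space duality.

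For $\BMO^d\subseteq\left(\tl\right)^*$, fix $\varphi\in\BMO^d$ and $f\in\tl$, and feed Key Lemma~1 the data $\bar M=\|\varphi\|_{\BMO^d}^2$, $M_J=\langle\varphi^2\rangle_J-\langle\varphi\rangle_J^2$ and $S_J=\sum_{I\in D,\,I\supsetneq J}\frac14(\langle f\rangle_{I_+}-\langle f\rangle_{I_-})^2$, the latter being the square function of $f$ in the sense of (\ref{d08}) (up to a factor $\frac14$), truncated to the scales strictly above $J$. Definition (\ref{t2}) gives $M\colon D\to[0,\bar M]$, and the averaging identities $\langle\varphi^2\rangle_J=\frac12(\langle\varphi^2\rangle_{J_-}+\langle\varphi^2\rangle_{J_+})$, $\langle\varphi\rangle_J=\frac12(\langle\varphi\rangle_{J_-}+\langle\varphi\rangle_{J_+})$ yield both the superharmonicity $M_J\ge\frac12(M_{J_-}+M_{J_+})$ and the exact identity $M_J-\frac12(M_{J_-}+M_{J_+})=\frac14(\langle\varphi\rangle_{J_+}-\langle\varphi\rangle_{J_-})^2=(\varphi,h_J)^2/|J|$; since the dyadic arcs strictly containing a child $J_\pm$ are exactly $J$ together with those strictly containing $J$, one also gets $S_{J_-}=S_{J_+}\ge S_J$ and $S_{J_+}-S_J=\frac14(\langle f\rangle_{J_+}-\langle f\rangle_{J_-})^2=(f,h_J)^2/|J|$. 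Hence the hypotheses (\ref{d01}) hold, each summand on the left of (\ref{d03}) equals exactly $|(f,h_J)(\varphi,h_J)|$, and on the right $2^{-n}\sum_{|J|=2^{-n}}\sqrt{S_J}=\frac12\int_\mathbb{T}\big(\Sigma_n(\theta)\big)^{1/2}d\theta$, where $\Sigma_n$ denotes the square function of (\ref{d08}) truncated to scales $>2^{-n}$, so that $\Sigma_n\le\Sigma$ with $\Sigma$ the full square function. Invoking Key Lemma~1 (whose hypothesis, the existence of a $C^2$ function $B$ satisfying (\ref{d02}), is met by the explicit Bellman function produced in the next section) and letting $n\to\infty$ — the left side of (\ref{d03}) increases to $\sum_{J\in D}|(f,h_J)(\varphi,h_J)|$ while the right side stays $\le\frac1{\sqrt2}\|\varphi\|_{\BMO^d}\|f\|_{\tl}$ — we conclude that the Haar pairing converges absolutely and $|\langle f,\varphi\rangle|\le\frac1{\sqrt2}\|\varphi\|_{\BMO^d}\|f\|_{\tl}$, i.e.\ $\BMO^d\subseteq\left(\tl\right)^*$ with the explicit constant $\frac1{\sqrt2}$.

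For $\left(\tl\right)^*\subseteq\BMO^d$, observe first that for a mean-zero $g\in L^2$, Cauchy--Schwarz (over $\theta\in\mathbb{T}$) and Parseval's identity for the Haar system give $\|g\|_{\tl}\le|\mathbb{T}|^{1/2}\big(\int_\mathbb{T}\sum_{I\ni\theta}(\langle g\rangle_{I_+}-\langle g\rangle_{I_-})^2\,d\theta\big)^{1/2}=2|\mathbb{T}|^{1/2}\|g\|_{L^2}$; more precisely, if $g$ is supported on an arc $J\in D$ and has mean zero, then its square function is supported on $J$ and $\|g\|_{\tl}\le2|J|^{1/2}\|g\|_{L^2(J)}$. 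Thus $L^2_0(\mathbb{T})$ embeds continuously into $\tl$ — and densely, by truncating Haar expansions — so any $\ell\in\left(\tl\right)^*$ restricts to a bounded functional on $L^2_0(\mathbb{T})$ and is represented by some $\varphi\in L^2_0(\mathbb{T})$ via $\ell(f)=(f,\varphi)$. To see $\varphi\in\BMO^d$, fix $J\in D$ and test $\ell$ against $f_J:=(\varphi-\langle\varphi\rangle_J)\mathbf{1}_J\in L^2_0(\mathbb{T})$, which is supported on $J$: then $\ell(f_J)=(f_J,\varphi)=\|\varphi-\langle\varphi\rangle_J\|_{L^2(J)}^2$, while $\|f_J\|_{\tl}\le2|J|^{1/2}\|\varphi-\langle\varphi\rangle_J\|_{L^2(J)}$, so $\|\varphi-\langle\varphi\rangle_J\|_{L^2(J)}^2\le\|\ell\|\,\|f_J\|_{\tl}\le2\|\ell\|\,|J|^{1/2}\|\varphi-\langle\varphi\rangle_J\|_{L^2(J)}$; dividing, $\frac1{|J|}\int_J|\varphi-\langle\varphi\rangle_J|^2\le4\|\ell\|^2$ for every $J\in D$, which by (\ref{t1}) means $\varphi\in\BMO^d$ with $\|\varphi\|_{\BMO^d}\le2\|\ell\|$. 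Finally, by the first inclusion $f\mapsto\langle f,\varphi\rangle$ is a bounded functional on $\tl$ agreeing with $\ell$ on the dense subspace $L^2_0(\mathbb{T})$, hence $\ell=\langle\,\cdot\,,\varphi\rangle$ on all of $\tl$.

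The only substantive step is the first inclusion, and there the single non-routine point is the choice of the auxiliary functions: once $M_J=\langle\varphi^2\rangle_J-\langle\varphi\rangle_J^2$ and $S_J$ (the truncated square function) are in place, the abstract estimate (\ref{d03}) collapses summand-by-summand on the left and, via a one-scale Cauchy--Schwarz, to $\|f\|_{\tl}$ on the right, with the truncated square function increasing to the full one as $n\to\infty$. The genuinely hard ingredient, the construction of a function $B$ obeying (\ref{d02}), is not part of this argument — it is carried out once, in the following section, and Key Lemma~1 packages it. The second inclusion is standard: the continuous embedding $L^2_0(\mathbb{T})\hookrightarrow\tl$, Riesz representation, and localized testing of $\ell$ against $(\varphi-\langle\varphi\rangle_J)\mathbf{1}_J$.
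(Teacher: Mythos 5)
Your proposal is correct and follows essentially the same route as the paper: the forward inclusion is exactly the paper's application of Key Lemma~1, with your $M_J=\langle\varphi^2\rangle_J-\langle\varphi\rangle_J^2$ and your $S_J$ being the paper's choices rescaled by $\tfrac14$ (so your constant $1/\sqrt2$ in the $(\ref{t2})$-normalization agrees with the paper's $\sqrt2/4$ in the $(\ref{d06})$-normalization), and the limit $n\to\infty$ is handled the same way. The one genuine difference is in the converse inclusion: after the continuous dense embedding $L^2_0\hookrightarrow\tl$ and Riesz representation, you test $\ell$ against $(\varphi-\langle\varphi\rangle_J)\chi_J$ rather than against $L^\infty$-normalized atoms. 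This yields the $L^2$-based $\BMO$ bound $\frac1{|J|}\int_J|\varphi-\langle\varphi\rangle_J|^2\le4\|\ell\|^2$ directly and lets you bypass the John--Nirenberg inequality, which the paper needs to pass from its $L^1$ oscillation bound to the $L^2$-based definition $(\ref{t1})$; this is a small but real simplification, at no cost in generality.
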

{\sc Proof.} The more difficult inclusion is handled using the Bellman-function lemma stated above.
\begin{lemma}$\ds \BMO^d\subset \left(\tl\right)^*.$ More precisely, in terms of the Haar coefficients, for every $\ds \varphi\in \BMO^d$ and $f\in \tl$,
\begin{align}
\label{d09}
\sum_{J\in D}|(f,h_J)|\,|(\varphi,h_J)|&=\frac14\sum_{J\in D}|J|\,|\left<f\right>_{J_+}-\left<f\right>_{J_-}|\,|\left<\varphi\right>_{J_+}-\left<\varphi\right>_{J_-}|\\*
\notag&\le
\frac{\sqrt2}4\|\varphi\|_{\scriptscriptstyle \BMO^d}\,\| f \|_{\tl}.
\end{align}
\end{lemma}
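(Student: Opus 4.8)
The plan is to deduce (\ref{d09}) from Key Lemma~1, using only its conclusion (\ref{d03}). Fix $f\in\tl$ and $\varphi\in\BMO^d$. For an arc $J$ in the dyadic lattice $D=D_\mathbb{T}$, rooted in $I_0=\mathbb{T}$, I would set
\[
S_J:=\sum_{I\in D,\ I\supsetneq J}\bigl(\langle f\rangle_{I_+}-\langle f\rangle_{I_-}\bigr)^2,\qquad M_J:=\langle\varphi^2\rangle_J-\langle\varphi\rangle_J^2,
\]
the sum defining $S_J$ being finite since $J$ has only finitely many proper ancestors. First I would check the hypotheses (\ref{d01}). Since $S_{J_-}$ and $S_{J_+}$ each arise from $S_J$ by adjoining the single term indexed by the parent $J$, namely $(\langle f\rangle_{J_+}-\langle f\rangle_{J_-})^2$, one has $S_{J_-}=S_{J_+}\ge S_J$ and, what will really be used, $S_{J_+}-S_J=(\langle f\rangle_{J_+}-\langle f\rangle_{J_-})^2$. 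For $M$, the averaging identities $\langle\varphi^2\rangle_J=\tfrac12(\langle\varphi^2\rangle_{J_-}+\langle\varphi^2\rangle_{J_+})$ and $\langle\varphi\rangle_J=\tfrac12(\langle\varphi\rangle_{J_-}+\langle\varphi\rangle_{J_+})$ give, in two lines, $M_J-\tfrac12(M_{J_-}+M_{J_+})=\tfrac14(\langle\varphi\rangle_{J_+}-\langle\varphi\rangle_{J_-})^2\ge0$, so in particular $M_J\ge\tfrac12(M_{J_-}+M_{J_+})$. Finally $M_J\ge0$, and --- this is the normalization step that decides the final constant --- by (\ref{d06}) together with $(\varphi,h_I)^2=\tfrac{|I|}4(\langle\varphi\rangle_{I_-}-\langle\varphi\rangle_{I_+})^2$ one gets $M_J=\tfrac1{4|J|}\sum_{I\subseteq J}|I|(\langle\varphi\rangle_{I_+}-\langle\varphi\rangle_{I_-})^2\le\tfrac14\|\varphi\|_{\BMO^d}^2$. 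Thus $M\colon D\to[0,\bar M]$ with $\bar M=\tfrac14\|\varphi\|_{\BMO^d}^2$.

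Next I would feed $S$, $M$, $\bar M$ into Key Lemma~1. By the two identities above, $\sqrt{(S_{J_+}-S_J)(M_J-\tfrac12(M_{J_-}+M_{J_+}))}=\tfrac12|\langle f\rangle_{J_+}-\langle f\rangle_{J_-}|\,|\langle\varphi\rangle_{J_+}-\langle\varphi\rangle_{J_-}|$, while $\sqrt{2\bar M}=\|\varphi\|_{\BMO^d}/\sqrt2$, so (\ref{d03}) reads
\[
\tfrac12\sum_{\substack{J\in D\\ |J|\ge2^{-n+1}}}|J|\,|\langle f\rangle_{J_+}-\langle f\rangle_{J_-}|\,|\langle\varphi\rangle_{J_+}-\langle\varphi\rangle_{J_-}|\ \le\ \frac{\|\varphi\|_{\BMO^d}}{\sqrt2}\cdot2^{-n}\sum_{\substack{J\in D\\ |J|=2^{-n}}}\sqrt{S_J}.
\]
It remains to bound the right-hand side by $\|\varphi\|_{\BMO^d}\|f\|_{\tl}/\sqrt2$. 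For $|J|=2^{-n}$ and every $\theta\in J$, every proper ancestor of $J$ contains $\theta$, so $S_J\le\sum_{I\ni\theta}(\langle f\rangle_{I_+}-\langle f\rangle_{I_-})^2$ and hence $\sqrt{S_J}\le\bigl(\sum_{I\ni\theta}(\langle f\rangle_{I_+}-\langle f\rangle_{I_-})^2\bigr)^{1/2}$; integrating over $J$ and summing over the partition $\{J\in D:|J|=2^{-n}\}$ of $\mathbb{T}$ gives $2^{-n}\sum_{|J|=2^{-n}}\sqrt{S_J}\le\|f\|_{\tl}$.

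Combining these two facts, letting $n\to\infty$ (monotone convergence: all terms are nonnegative and the left-hand range of summation increases), and dividing by $2$, I would obtain $\tfrac14\sum_{J\in D}|J|\,|\langle f\rangle_{J_+}-\langle f\rangle_{J_-}|\,|\langle\varphi\rangle_{J_+}-\langle\varphi\rangle_{J_-}|\le\tfrac{\sqrt2}4\|\varphi\|_{\BMO^d}\|f\|_{\tl}$, which is exactly (\ref{d09}) once one recalls $|(f,h_J)||(\varphi,h_J)|=\tfrac{|J|}4|\langle f\rangle_{J_+}-\langle f\rangle_{J_-}||\langle\varphi\rangle_{J_+}-\langle\varphi\rangle_{J_-}|$. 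The embedding $\BMO^d\subset(\tl)^*$ then follows, since $f\mapsto\sum_J(f,h_J)(\varphi,h_J)$ is a bounded linear functional on $\tl$ of norm at most $\tfrac{\sqrt2}4\|\varphi\|_{\BMO^d}$.

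I do not expect a deep obstacle. The content lies in two places. One is hitting on the right auxiliary sequences --- $S_J$ the square function of $f$ accumulated over scales coarser than $|J|$ and $M_J$ the local $L^2$ oscillation of $\varphi$ --- so that the square root in (\ref{d03}) telescopes into one product of Haar differences and so that the right-hand side of (\ref{d03}) is controlled by the Triebel--Lizorkin norm; that last control, the pointwise domination of $\sqrt{S_J}$ by the square function of $f$ at any $\theta\in J$, is the one genuinely \emph{load-bearing} estimate. The other is careful bookkeeping: it is precisely the identity $\bar M=\tfrac14\|\varphi\|_{\BMO^d}^2$ (and not $\|\varphi\|_{\BMO^d}^2$) that converts the constant $\tfrac{\sqrt2}2$ a hasty computation would give into the asserted $\tfrac{\sqrt2}4$.
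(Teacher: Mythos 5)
Your proof is correct and follows essentially the same route as the paper's: the same $S_J$, the same verification of (\ref{d01}), the same application of Key Lemma 1, and the same passage to the limit; the only cosmetic difference is that the paper takes $M_J=\frac1{|J|}\sum_{I\subset J}|I|\left(\left<\varphi\right>_{I_+}-\left<\varphi\right>_{I_-}\right)^2$ (so $\bar M=\|\varphi\|^2_{\BMO^d}$ and the discrete Laplacian of $M$ carries no factor $\tfrac14$), which by the invariance of (\ref{d03}) under rescaling $M$ yields the identical constant $\tfrac{\sqrt2}4$. Your one-sided bound $2^{-n}\sum_{|J|=2^{-n}}\sqrt{S_J}\le\|f\|_{\tl}$ with monotone convergence on the left is a slightly cleaner substitute for the paper's dominated-convergence step, but it is not a different argument.
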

\begin{proof}Fix $\varphi\in \BMO^d,~f\in \tl.$ For every $J\in D$ define
$$
M_J=\frac1{|J|}\sum_{I\subset J}\left(\left<\varphi\right>_{I_+}-\left<\varphi\right>_{I_-}\right)^2|I|.
$$ 
Then $0\le M_J\le \bar{M}\stackrel{def}{=}\| \varphi \|^2_{\BMO^d}$ and
$M_J-\frac12\left(M_{J_+}+M_{J_-}\right)=\left(\left<\varphi\right>_{J_+}-\left<\varphi\right>_{J_-}\right)^2.$
Define 
$$
S_J=\sum_{I\supsetneq J}\left(\left<f\right>_{I_+}-\left<f\right>_{I_-}\right)^2.
$$ 
Then $S_{J_+}=S_{J_-}=\sum_{I\supset J}\left(\left<f\right>_{I_+}-\left<f\right>_{I_-}\right)^2$ and 
$S_{J_+}-S_J=\left(\left<f\right>_{J_+}-\left<f\right>_{J_-}\right)^2.$ We thus see that the conditions (\ref{d01}) of the lemma are satisfied.
\par

Assuming the existence of the function $B$ in the lemma and using (\ref{d03}), we obtain
\begin{align*}
\sum_
{\substack{ 
J\in D\\
|J|\ge2^{{-n+1}} 
}}
\!\!\!\!\!|J|~&|\left<f\right>_{J_+}-\left<f\right>_{J_-}||\left<\varphi\right>_{J_+}-\left<\varphi\right>_{J_-}|\\
&\le
\sqrt{2\bar{M}}~2^{-n}
\!\!\!\!\!\sum_
{\substack{ 
J\in D\\
|J|=2^{{-n}} 
}}
\sqrt{\sum_{I\supsetneq J}\left(\left<f\right>_{I_+}-\left<f\right>_{I_-}\right)^2}
=\sqrt{2\bar{M}}\int_{\mathbb{T}}\varphi_n(\theta)\,d\theta,
\end{align*}
where $\varphi_n$ is the step function, $\varphi_n(\theta)=
\sqrt{\sum_{I\supsetneq J}\left(\left<f\right>_{I_+}-\left<f\right>_{I_-}\right)^2},$ $\theta\in J$ for each $J\in D$ of length $2^{-n}.$ Since $\varphi_n(\theta)\to
\sqrt{\sum_{I\ni \theta;I\in D}\left(\left<f\right>_{I_+}-\left<f\right>_{I_-}\right)^2},a.e.$ and $f\in\tl,$ letting $n\to \infty$ yields the statement (\ref{d09}) of the theorem, by dominated convergence.
\end{proof}
The proof of the converse inclusion is standard; we include it here for the sake of completeness. 
\begin{lemma}
$\ds \left(\tl\right)^*\subset \BMO^d.$
\end{lemma}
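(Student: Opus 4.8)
\emph{Proof sketch.} The plan is the standard one: reconstruct the symbol of a functional $\Lambda\in\left(\tl\right)^*$ from its values on the Haar basis, and then read off a $\BMO^d$ bound directly from the characterization (\ref{d06}). First I would record that every normalized Haar function $h_J$ belongs to $\tl$: its only nonzero Haar coefficient is $(h_J,h_J)=1$, so by the formula $(f,h_I)=\tfrac{\sqrt{|I|}}2\bigl(\langle f\rangle_{I_-}-\langle f\rangle_{I_+}\bigr)$ its square function equals $\tfrac{2}{\sqrt{|J|}}\mathbf{1}_J$, whence $\|h_J\|_{\tl}=2\sqrt{|J|}$. Given $\Lambda\in\left(\tl\right)^*$, set $c_I:=\Lambda(h_I)$ for $I\in D$; the goal is to show that $\varphi:=\sum_{I\in D}c_Ih_I$ is, modulo constants, a genuine element of $\BMO^d$ with $\|\varphi\|_{\BMO^d}\le 4\|\Lambda\|$, and that $\Lambda$ is integration against $\varphi$.

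The heart of the matter is a localized, self-improving estimate. Fix $J\in D$ and a cutoff scale $n$, and let $g=\sum_{I\subseteq J,\;|I|\ge 2^{-n}}c_Ih_I$. Being a finite combination, $g\in\tl$; moreover $g$ is supported in $J$ with mean zero there, from which one computes that its square function is supported in $J$ and equals $\bigl(\sum_{I\ni\theta,\;I\subseteq J,\;|I|\ge 2^{-n}}4c_I^2/|I|\bigr)^{1/2}$ on $J$. Cauchy--Schwarz in $\theta$ over $J$ together with Fubini then gives
$$
\|g\|_{\tl}\;\le\;2|J|^{1/2}\Bigl(\sum_{I\subseteq J,\;|I|\ge 2^{-n}}c_I^2\Bigr)^{1/2}.
$$
On the other hand, orthonormality of the Haar system gives $\Lambda(g)=\sum_{I\subseteq J,\;|I|\ge 2^{-n}}c_I^2$. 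Writing $A$ for this sum and combining, $A=\Lambda(g)\le\|\Lambda\|\,\|g\|_{\tl}\le 2\|\Lambda\|\,|J|^{1/2}A^{1/2}$, so $A\le 4\|\Lambda\|^2|J|$, uniformly in $n$. Letting $n\to\infty$ yields $\sum_{I\subseteq J}\Lambda(h_I)^2\le 4\|\Lambda\|^2|J|$ for every $J\in D$; in particular $\sum_{I\subseteq J}c_Ih_I$ converges in $L^2_0(J)$ for each $J$, so $\varphi$ is a well-defined locally $L^2$ function with $(\varphi,h_I)=c_I$.

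To conclude, observe that $\bigl(\langle\varphi\rangle_{I_+}-\langle\varphi\rangle_{I_-}\bigr)^2|I|=4(\varphi,h_I)^2=4\Lambda(h_I)^2$, so the estimate just obtained reads exactly $\tfrac1{|J|}\sum_{I\subseteq J}\bigl(\langle\varphi\rangle_{I_+}-\langle\varphi\rangle_{I_-}\bigr)^2|I|\le 16\|\Lambda\|^2$; by (\ref{d06}) this means $\varphi\in\BMO^d$ with $\|\varphi\|_{\BMO^d}\le 4\|\Lambda\|$. Finally, for $f\in\tl$ with a finite Haar expansion, linearity gives $\Lambda(f)=\sum_I(f,h_I)\Lambda(h_I)=\sum_I(f,h_I)(\varphi,h_I)=\int_{\mathbb{T}}(f-\langle f\rangle_{\mathbb{T}})\varphi$; since truncating a Haar expansion converges in $\tl$-norm (the tail square function decreases pointwise to zero and is dominated by the full square function, so dominated convergence applies), such $f$ are dense in $\tl$, and $\Lambda$ agrees with the pairing against $\varphi$ on all of $\tl$. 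I expect no serious obstacle here --- this is the soft direction --- the only point requiring care being that the self-improvement must be run on finite, $J$-localized sums so that both $\Lambda(g)=\sum c_I^2$ and the square-function bound are in force before passing to the limit; a subordinate bookkeeping point is that $\|\cdot\|_{\tl}$ and $\|\cdot\|_{\BMO^d}$ are norms only modulo constants, so one works in that quotient throughout (equivalently, $\Lambda$ annihilates constants and $\varphi$ is determined up to an additive constant).
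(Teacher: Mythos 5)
Your argument is correct, and it differs from the paper's proof in the step that actually produces the $\BMO^d$ bound. The paper first embeds $L^2_0$ into $\tl$, invokes the Riesz representation theorem to get a symbol $\varphi\in L^2_0$, and then tests the functional on arbitrary $L^\infty$-normalized atoms $a_I$; this yields only the $L^1$-based oscillation bound $\int_I|\varphi-\langle\varphi\rangle_I|\le 2\|l\|\,|I|$, so the John--Nirenberg inequality is needed to pass to the $L^2$-based definition (\ref{t1}). You instead reconstruct the symbol directly from the Haar coefficients $c_I=\Lambda(h_I)$ and test $\Lambda$ on the $J$-localized, scale-truncated projection of its own symbol; the resulting self-improving inequality $A\le 2\|\Lambda\|\,|J|^{1/2}A^{1/2}$ lands you immediately in the quadratic characterization (\ref{d06}), with the explicit constant $\|\varphi\|_{\BMO^d}\le 4\|\Lambda\|$ and no appeal to John--Nirenberg. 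That is a genuine gain in self-containedness and in the quality of the constant (the paper's route gives a constant depending on the John--Nirenberg comparison of $L^1$- and $L^2$-based norms), at the cost of being tied to the Haar-coefficient formulation rather than working with general atoms. The remaining ingredients --- the identity $\|h_J\|_{\tl}=2\sqrt{|J|}$, the computation of the square function of a $J$-supported mean-zero finite Haar sum, and the density of finite Haar expansions in $\tl$ via dominated convergence on the tail of the square function --- all check out and coincide with, or are minor variants of, what the paper does; your closing remark about working modulo constants is the right way to handle the fact that $\|\cdot\|_{\tl}$ annihilates constants.
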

\begin{proof} 
We want to show that for every continuous linear functional $l$ on $\tl$ there exists $\varphi\in \BMO^d$ such that
\eq[d063]{
\|\varphi\|_{\BMO^d}\le c\|l\|
}
and
\eq[d064]{
l(f)=\int_{\mathbb{T}}\varphi(\theta)f(\theta)\,d\theta,~~~\forall f\in\tl.
}
First, we observe that $L^2_0\subset\tl.$ Indeed, for $f\in L^2_0,$
\begin{align*}
\|f\|^2_{\tl}&=\left(\int_\mathbb{T}\left(\sum_{I\ni \theta;I\in D}\left(\left<f\right>_{I_+}-\left<f\right>_{I_-}\right)^2\right)^{\frac12}\!\!\!d\theta\right)^2\le
\int_\mathbb{T}\!\sum_{I\ni \theta;I\in D}\!\!\left(\left<f\right>_{I_+}-\left<f\right>_{I_-}\right)^2\!\!d\theta\\
&=\sum_{I\in D}\int_{\mathbb{T}}\chi_I(\theta)\left(\left<f\right>_{I_+}-\left<f\right>_{I_-}\right)^2d\theta
=\sum_{I\in D}|I|\left(\left<f\right>_{I_+}-\left<f\right>_{I_-}\right)^2=4\|f\|^2_{L^2_0}.
\end{align*}
Let $l\in\left(\tl\right)^*.$ We can apply the Riesz representation theorem to $\left.l\right|_{L^2_0}$ and conclude that there exists a function $\varphi\in L^2_0$ such that 
\eq[d067]{
l(f)=\int_{\mathbb{T}}\varphi(\theta)f(\theta)\,d\theta,~~~\forall f\in L^2_0.
}
We test $l$ on appropriate elements of $L^2_0$ to see that $\varphi\in \BMO^d.$ Let $a_I$ be an atom associated with a dyadic arc $I,$ i.e. be supported on $I$ with $|a_I|\le\frac1{|I|},~a.e.$ and $\int_I a(\theta)\,d\theta=0.$ We have
\begin{align*}
\|a_I\|_{\tl}&=\int_{\mathbb{T}}\left(\sum_{J\ni \theta;J\in D}
\left(\left<a_I\right>_{J_+}-\left<a_I\right>_{J_-}\right)^2\right)^{1/2}d\theta\\
&=\int_I\left(\sum_{J\ni \theta;J\subset I}
\left(\left<a_I\right>_{J_+}-\left<a_I\right>_{J_-}\right)^2\right)^{1/2}d\theta\\
&\le\left(\int_I\sum_{J\ni \theta;J\subset I}
\left(\left<a_I\right>_{J_+}-\left<a_I\right>_{J_-}\right)^2d\theta\right)^{1/2}
\left(\int_I 1\,d\theta\right)^{1/2}\\
&=2\|a_I\|_{L^2_0}\sqrt{|I|}\le\frac2{\sqrt{|I|}}\sqrt{|I|}=2,
\end{align*}
and hence
$$
\left|\int_I\left(\varphi-\left<\varphi\right>_I\right)a_I\right|=\left|\int_{\mathbb{T}}\varphi\,a_I\right|=
|l(a_I)|\le\|l\|\|a_I\|_{\tl}\le2\|l\|.
$$
Since this is true for any atom $a_I,$ we conclude that $\int_I|\varphi-\left<\varphi\right>_I|\le2\|l\||I|$ and thus 
that $\varphi\in \BMO^d$ with the norm estimate (\ref{d063}). Here we have used the equivalence of the $L^1\!$- and $L^2\!$-based $\BMO$ norms, which is due to the John-Nirenberg inequality. The proof of Lemma 2.3 (and hence Theorem 2.1) thus depends on proving that $L^2_0$ is dense in $\tl.$ Together with (\ref{d067}) this will yield the result. 
\par

Take $f\in \tl.$ Let $f_n$ be the truncation of its Haar expansion at the $n\!$-th generation of the dyadic lattice, 
$$
f_n=\sum_{\substack{J\in D\\|J|\ge2^{{-n}}}}(f,h_J)h_J.
$$ 
While $\{f_n\}$ may not converge in the $L^2_0\!$-norm, we show that it does converge\linebreak (to $f$) in the $\tl\!$-norm. We have
$$
\|f-f_n\|_{\tl}=\int_{\mathbb{T}}\left(\sum_{J\ni\theta}\frac4{|J|}(f-f_n,h_J)^2\right)^{1/2}d\theta=
\int_{\mathbb{T}}\Biggl(\sum_{\substack{J\ni\theta\\|J|<2^{{-n}}}}\frac4{|J|}(f,h_J)^2\Biggr)^{1/2}d\theta.
$$
Since $f\in\tl,$ the dominated convergence theorem applies, so $\|f-f_n\|_{\tl}\to0$ as $n\to\infty.$ This concludes the proof of Lemma 2.3 and Theorem 2.1.
\end{proof}
\section{The continuous case}
We define $H^1=H^1(\mathbb{T})$ using the area integral (see, for instance, \cite{stein}); specifically
\eq[d2]{
H^1=\left\{f\in L^1: 
\int_\mathbb{T}\left(\int_{\Gamma_\alpha(e^{i\theta})}|f'(\xi)|^2dA(\xi)\right)^{1/2}d\theta<\infty\right\}
}
with the norm 
$$
\| f\|_{H^1}=\int_\mathbb{T}\left(\int_{\Gamma_\alpha(e^{i\theta})}|f'(\xi)|^2dA(\xi)\right)^{1/2}d\theta. 
$$
Here 
$f(z)$ is the harmonic extension of $f$ into $\mathbb{D}.$ $\ds \Gamma_\alpha(e^{i\theta})$ is the cone-like region with vertex $e^{i\theta}:$
$\ds \Gamma_\alpha(e^{i\theta})=\left\{z\in \mathbb{D}:~\frac{|e^{i\theta}-z|}{1-|z|}< \frac1{\sin\alpha}\right\}$ (see Fig.1). For our purposes, the angle $\alpha$ must be small enough; we will make this more precise shortly.

\begin{figure}[ht]
  \centering{\includegraphics{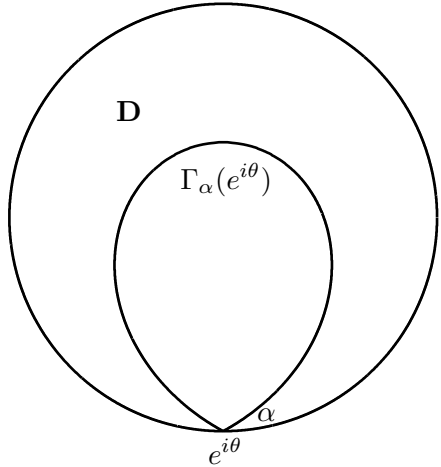}}
%
\caption{The region $\Gamma_\alpha(e^{i\theta}).$}
\end{figure}

%
%
%
The corresponding definition of $\BMO_0=\BMO_0(\mathbb{T})$ is 
\eq[d3]{
\BMO_0=\left\{\varphi\in L^1:\sup_{{\rm arc~}I\subset \mathbb{T}}\frac1{|I|}\int_{Q_I}|\varphi'(\xi)|^2(1-|\xi|)\,dA(\xi)<\infty,
~\varphi(0)=0\right\},
}
where $\varphi(z)$ is the harmonic extension of $\varphi$ into $\mathbb{D}$ and $Q_I$ is the Carleson square corresponding to the arc $I,$ $Q_I=\{z\in \mathbb{D}:~z/|z|\in I, |z|\ge1-|I|\}.$ The norm in this space is then 
$$
\ds \| \varphi\|_{\BMO_0}=
\sup_{{\rm arc~}I\subset \mathbb{T}}\left(\frac1{|I|}\int_{Q_I}|\varphi'(\xi)|^2(1-|\xi|)dA(\xi)\right)^{1/2}.
$$
We are now in a position to state the main result.
\begin{theorem}
$\ds \BMO_0\subset \left(H^1\right)^*.$ More precisely,
\eq[d5]{
\left|\int_\mathbb{T}\varphi(e^{i\theta})\bar{f}(e^{i\theta})\,d\theta\right|\le C\| \varphi \|_{\BMO_0}\| f\|_{H^1},
~\forall \varphi\in \BMO_0, \forall f\in H^1.
}
\end{theorem}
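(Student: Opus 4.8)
The plan is to reduce Theorem~3.1 to Key Lemma~1 by discretizing the disk with a Whitney decomposition subordinate to the dyadic lattice $D=D_\mathbb{T}$, after which the argument of Lemma~2.3 goes through almost verbatim --- with the same Bellman function $B$, and with Haar coefficients replaced by local Dirichlet integrals over Whitney boxes. The first step is to pass from the boundary to the interior: by the Littlewood--Paley (Green) identity, for $f$ and $\varphi$ in a suitable dense class (say $f$ analytic in a neighbourhood of $\overline{\mathbb{D}}$ and $\varphi$ a mean-zero trigonometric polynomial),
\[
\int_\mathbb{T}\varphi(e^{i\theta})\,\bar f(e^{i\theta})\,d\theta
=2\pi\,\varphi(0)\,\overline{f(0)}+c\int_\mathbb{D}\nabla\varphi(\xi)\cdot\overline{\nabla f(\xi)}\,\log\tfrac1{|\xi|}\,dA(\xi),
\]
so the hypothesis $\varphi(0)=0$ removes the first term and leaves $\bigl|\int_\mathbb{T}\varphi\bar f\,d\theta\bigr|\le c\int_\mathbb{D}|\varphi'|\,|f'|\,\log(1/|\xi|)\,dA$, using $|\nabla\varphi|\asymp|\varphi'|$ and $|\nabla f|\asymp|f'|$. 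Once (d5) is proved in this dense class with a constant independent of $f$ and $\varphi$, the general case follows by the usual dilation/density argument, so I may assume everything is smooth up to $\mathbb{T}$.

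Next, for $J\in D$ set $W_J=Q_J\setminus(Q_{J_-}\cup Q_{J_+})=\{\xi\in\mathbb{D}:\ \xi/|\xi|\in J,\ \tfrac12|J|\le 1-|\xi|\le|J|\}$; the $W_J$ tile $\mathbb{D}$ up to a null set, $\bigcup_{I\subset J}W_I=Q_J$, and on $W_J$ one has $1-|\xi|\asymp\log(1/|\xi|)\asymp|J|$ --- the finitely many largest boxes near the origin, where this fails, give a bounded contribution controlled by interior estimates for $f$ and $\varphi$. Define
\[
M_J=\frac{4}{|J|}\sum_{I\subset J}\int_{W_I}|\varphi'|^2\log\tfrac1{|\xi|}\,dA,
\qquad
S_J=4\sum_{I\supsetneq J}\frac1{|I|}\int_{W_I}|f'|^2\log\tfrac1{|\xi|}\,dA.
\]
Then $M_J\asymp\frac1{|J|}\int_{Q_J}|\varphi'|^2(1-|\xi|)\,dA\le\bar M:=c'\|\varphi\|_{\BMO_0}^2$, while $S_{J_-}=S_{J_+}\ge S_J$ and $M_I\ge\tfrac12(M_{I_-}+M_{I_+})$, so the hypotheses (d01) of Key Lemma~1 hold. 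Moreover the telescoping identities $S_{J_+}-S_J=\frac{4}{|J|}\int_{W_J}|f'|^2\log(1/|\xi|)\,dA$ and $M_J-\tfrac12(M_{J_-}+M_{J_+})=\frac{4}{|J|}\int_{W_J}|\varphi'|^2\log(1/|\xi|)\,dA$ are exact, so Cauchy--Schwarz on each box yields
\[
\int_{W_J}|\varphi'|\,|f'|\,\log\tfrac1{|\xi|}\,dA\le\frac{|J|}{4}\sqrt{(S_{J_+}-S_J)\bigl(M_J-\tfrac12(M_{J_-}+M_{J_+})\bigr)}.
\]

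Then I would sum this over $J$ with $|J|\ge2^{-n+1}$, invoke (d03) with the assumed Bellman function $B$, and let $n\to\infty$ exactly as in the proof of Lemma~2.3, obtaining
\[
\int_\mathbb{D}|\varphi'|\,|f'|\,\log\tfrac1{|\xi|}\,dA
\le\frac{\sqrt{2\bar M}}{4}\int_\mathbb{T}\Bigl(\sum_{I\ni\theta}\frac4{|I|}\int_{W_I}|f'|^2\log\tfrac1{|\xi|}\,dA\Bigr)^{1/2}d\theta.
\]
Finally, fixing $\alpha$ small enough that $W_I\subset\Gamma_\alpha(e^{i\theta})$ whenever $e^{i\theta}\in I$ (this is the smallness of $\alpha$ flagged after (d2); a direct computation shows $\sin\alpha$ below an absolute constant suffices, since on $W_I$ with $e^{i\theta}\in I$ one has $|e^{i\theta}-\xi|\lesssim|I|\lesssim 1-|\xi|$), one gets $\bigcup_{I\ni\theta}W_I\subset\Gamma_\alpha(e^{i\theta})$; as $\frac1{|I|}\log(1/|\xi|)\asymp1$ on $W_I$ and the boxes are disjoint, the right-hand side is $\lesssim\int_\mathbb{T}\bigl(\int_{\Gamma_\alpha(e^{i\theta})}|f'|^2\,dA\bigr)^{1/2}d\theta=\|f\|_{H^1}$. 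Chaining this with the first step gives (d5).

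The one place that calls for honest (if elementary) work, rather than transcription from the dyadic case, is precisely this Whitney/cone bookkeeping: choosing the aperture $\alpha$ so that each box $W_I$ is engulfed by $\Gamma_\alpha(e^{i\theta})$ for every $e^{i\theta}\in I$, and tracking the handful of comparison constants --- $1-|\xi|$ versus $\log(1/|\xi|)$ on each box, the Carleson-box form of $\|\varphi\|_{\BMO_0}$ versus its Whitney-sum form, and the genuine cone versus the dyadic cone $\bigcup_{I\ni\theta}W_I$. None of this is deep, but it is what makes the constant $C$ in (d5) merely explicit-in-principle, in contrast with the clean $\tfrac{\sqrt2}{4}$ of Lemma~2.3.
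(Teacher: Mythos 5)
Your proposal is correct and follows essentially the same route as the paper: your Whitney boxes $W_J$ are exactly the paper's top halves $TQ_J$ of the Carleson squares, your $S_J$ and $M_J$ coincide with the paper's up to the equivalent weights $\log(1/|\xi|)\asymp 1-|\xi|$ and a normalization, and Key Lemma 1 plus the aperture condition $\Gamma^d_J\subset\Gamma_\alpha(e^{i\theta})$ are used identically. The only (cosmetic) difference is the order of operations --- you invoke Green's formula at the outset, while the paper performs the box-by-box Cauchy--Schwarz and the comparison $|J|\asymp\log(1/|\xi|)$ on $TQ_J$ first and passes to the boundary pairing at the end.
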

\begin{proof}
Not surprisingly, the proof starts with a dyadic construction.
For every $J\in D=D_\mathbb{T}$ define 
$$
M_J=\frac1{|J|}\int_{Q_J}|\varphi'(\xi)|^2(1-|\xi|)\,dA(\xi).
$$ 
Clearly,
$\ds M_J\le \bar{M}\stackrel{def}{=}\| \varphi \|^2_{\BMO_0}.$ We have 
$$
M_J-\frac12\left(M_{J_+}-M_{J_-}\right)=\frac1{|J|}\int_{TQ_J}|\varphi'(\xi)|^2(1-|\xi|),dA(\xi).
$$ 
Here $TQ_J$ is the top half of the (dyadic) square $Q_J,$ $TQ_J=Q_J\backslash(Q_{J_+}\cup Q_{J_-})$ (see Fig.2).
\begin{figure}[ht]
\begin{center}
\begin{picture}(100,100)
\thicklines
\put(0,0){\line(1,0){100}}
\put(100,0){\line(0,1){100}}
\put(100,100){\line(-1,0){100}}
\put(0,100){\line(0,-1){100}}
\put(0,50){\line(1,0){100}}
\put(50,0){\line(0,1){50}}
\put(0,25){\line(1,0){100}}
\put(25,0){\line(0,1){25}}
\put(0,12){\line(1,0){50}}
\put(12,0){\line(0,1){12}}
\put(48,-19){$J$}
\put(22,-10){$J_-$}
\put(72,-10){$J_+$}
\put(43,70){$TQ_J$}
\put(18,30){$TQ_{J_-}$}
\put(68,30){$TQ_{J_+}$}
\end{picture}
\vspace{1cm}

\caption{The decomposition $\ds Q_J=\bigcup_{I\in D,I\subset J}TQ_I.$}
\end{center}
\end{figure}
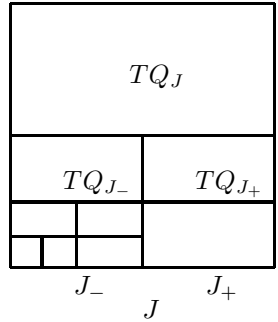

Define 
$$
S_J=\int_{\Gamma^d_J}|f'(\xi)|^2dA(\xi)=\sum_{I\supsetneq J; I\in D}\int_{TQ_I}|f'(\xi)|^2dA(\xi).
$$ 
Here $\Gamma^d_J$ is the dyadic cone, $\Gamma^d_J=\bigcup_{I\supsetneq J}TQ_J.$ We observe that there exists a critical value $\alpha_0>0$ that $\ds\Gamma^d_J\subset \Gamma_\alpha(e^{i\theta}),~\forall \theta\in J, 0<\alpha\le\alpha_0.$ For instance, if $0<\alpha<1/20,$ the inclusion holds. Fix any such $\alpha.$
\medskip

We have $S_{J_-}=S_{J_+}=\sum_{I\supseteq J; I\in D}\int_{TQ_I}|f'(\xi)|^2dA(\xi)$ and thus,
$S_{J_-}-S_J=S_{J_+}-S_J=\int_{TQ_J}|f'(\xi)|^2dA(\xi).$ Therefore, the conditions (\ref{d01}) of the key lemma are satisfied.
Assuming the existence of the function $B$ in the lemma and using (\ref{d03}), we have
\begin{align*}
\sum_{|J|\ge 2^{{-n+1}}}|J|&\left(\frac1{|J|}\int_{TQ_J}|\varphi'(\xi)|^2(1-|\xi|)\,dA(\xi)\right)^{1/2}
\left(\int_{TQ_J}|f'(\xi)|^2dA(\xi)\right)^{1/2}\\
&\le\sqrt{2\bar{M}}~2^{-n}\sum_{|J|=2^{{-n}}}\left(\int_{\Gamma^d_J}|f'(\xi)|^2dA(\xi)\right)^{1/2}.
\end{align*}
Let us estimate the left-hand side as $n\to\infty.$
\begin{align*}
\lim_{n\to\infty}&\sum_{|J|\ge 2^{{-n+1}}}|J|\left(\frac1{|J|}\int_{TQ_J}|\varphi'(\xi)|^2(1-|\xi|)\,dA(\xi)\right)^{1/2}
\left(\int_{TQ_J}|f'(\xi)|^2dA(\xi)\right)^{1/2}\\
&\ds=\sum_{J\in D}|J|^{1/2}\left(\int_{TQ_J}|\varphi'(\xi)|^2(1-|\xi|)\,dA(\xi)\right)^{1/2}
\left(\int_{TQ_J}|f'(\xi)|^2dA(\xi)\right)^{1/2}\\
&\ds\ge\sum_{J\in D}|J|^{1/2}\int_{TQ_J}|\varphi'(\xi)|\,|f'(\xi)|(1-|\xi|)^{1/2}dA(\xi)\\
&\ds\ge C'\sum_{J\in D}|J|\int_{TQ_J}|\varphi'(\xi)|\,|f'(\xi)|\,dA(\xi)\\
&\ds\ge C'\int_\mathbb{D}|\varphi'(\xi)|\,|f'(\xi)|\log{\frac1{|\xi|}}\,dA(\xi)\\*
\intertext{\noindent
(Here we have used the fact that $\ds (1-|\xi|)^{1/2}\sim |J|^{1/2}$ and $\ds |J|\sim \log{\frac1{|\xi|}}$ if $\xi\in TQ_J.$ In addition, $\ds \bigcup_{J\in D}TQ_J=\mathbb{D}.$)}
&\ds\ge C'\left|\int_\mathbb{D}\p \varphi \bar{\p} \bar{f} \log{\frac1{|\xi|}}dA(\xi)\right|\\
&\ds=C''\left|\int_\mathbb{D}\Delta(\varphi \bar{f})\log{\frac1{|\xi|}}dA(\xi)\right|,
\end{align*}
where we have used the fact that $\ds \p \varphi \bar{\p} \bar{f}=\p\bar{\p}(\varphi \bar{f})=\frac14\Delta(\varphi \bar{f}),$ since $\varphi$ and $f$ are analytic.
\par

Recall Green's formula
$$
\frac1{2\pi}\int_\mathbb{T}F(e^{i\theta})d\theta-F(0)=\frac1{2\pi}\int_\mathbb{D}\Delta F(\xi)\log\frac1{|\xi|}dA(\xi).
$$
Since $\varphi(0)=0,$ we get $\ds \lim_{n\to \infty}(LHS)\ge C\left|\int_\mathbb{T}
\varphi(e^{i\theta})\bar{f}(e^{i\theta})d\theta\right|.$
\par

On the right-hand side we obtain, as $n\to\infty,$ 
$$
\sqrt2\,\|\varphi\|_{\BMO_0}
\int_\mathbb{T}\left(\int_{\Gamma^d_\alpha(e^{{i\theta}})}|f'(\xi)|^2dA(\xi)\right)^{1/2}d\theta,
$$
where $\ds \Gamma^d_\alpha(e^{{i\theta}})=\bigcup_{J\ni e^{i\theta}}\Gamma^d_J.$ Since each 
$\ds \Gamma^d_J\subset \Gamma_\alpha(e^{i\theta}),$ we have 
$\ds \Gamma^d_\alpha(e^{i\theta})\subset \Gamma_\alpha(e^{i\theta}),$ and thus
$$
\int_\mathbb{T}\left(\int_{\Gamma^d_\alpha(e^{{i\theta}})}|f'(\xi)|^2dA(\xi)\right)^{1/2}d\theta\le
\int_\mathbb{T}\left(\int_{\Gamma_\alpha(e^{{i\theta}})}|f'(\xi)|^2dA(\xi)\right)^{1/2}d\theta=\| f\|_{H^1}.
$$
Putting together the estimates for the right- and left-hand sides, we obtain the statement (\ref{d5}).
\end{proof}
\section{Multi-dimensional setting} 
In this section, we first reformulate conditions (\ref{d01}) and conclusion (\ref{d03}) of Lemma 1.1 in terms of higher-dimensional dyadic lattices. (Observe that conditions (\ref{d02}) on the function $B$ do not change, and so the same function can be used in any dimension.) We then prove a multi-dimensional analog of Theorem 3.1.

Let $D=D_{P}$ be the dyadic lattice rooted in a cube $P\subset \mathbb{R}^n.$ For a cube $I\in D,$ let $I^1,I^2,...,I^{2^n}$ be its dyadic offspring, that is the $2^n$ disjoint dyadic subcubes of $I$ of size $2^{-n}|I|$.  Consider two functions, $S: D\to [0,\infty)$ and $M: D\to [0,\bar{M}],$ such that 
\eq[d001]{
S_{I^1}=S_{I^2}=...=S_{I^{2^n}}\ge S_I~~~~\text{and}~~~~ 
M_I\ge 2^{-n}\sum_{v=1}^{2^n}M_{I^v},\forall I\in D. 
}
\begin{lemma}[Key Lemma 2]Let $S$ and $M$ be as above. Assume there exists a $C^2\!\!$-function $B: [0,\infty)\times [0,\bar{M}]\to \mathbb{R}$ (except, possibly, that $B_x$ or $B_{xx}$ may fail to exist when $x=0$), satisfying
\eq[d002]{
0\le B(x,y)\le 2\bar{M}\sqrt{x},~~~
-\frac{\pp B}{\p x}\frac{\pp B}{\p y}\ge \frac{\bar{M}}2,~~~
\frac{\p^2\! B}{\p x^2}\le 0,~~~\frac{\p^2\! B}{\p y^2}\ge 0,~~~B(0,y)=0.
}
Then, for any positive integer $m,$
\eq[d003]{
\sum_
{\substack{ 
J\in D\\
|J|\ge2^{-n(m-1)} 
}}
|J|\sqrt{(S_{J^1}-S_J)\left(M_J-2^{-n}\sum_{v=1}^{2^n}M_{I^v}\right)}
\le
\sqrt{2\bar{M}}~2^{-nm}\!\!\!\!
\sum_
{\substack{ 
J\in D\\
|J|=2^{{-nm}} 
}}
\sqrt{S_J}.
}
\end{lemma}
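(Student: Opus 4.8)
The plan is to run an induction-by-scales (Bellman) argument with the telescoping quantity built directly from $B$. Normalize $|P|=1$, write $\mathcal{D}_k=\{J\in D:|J|=2^{-nk}\}$ for the $k$-th generation of $D$, and set
$$
T_k=\sum_{J\in\mathcal{D}_k}|J|\,B(S_J,M_J),\qquad k=0,1,2,\dots
$$
Since $\mathcal{D}_{k+1}$ is the disjoint union of the offspring of the cubes of $\mathcal{D}_k$ and $|J^v|=2^{-n}|J|$, everything reduces to a single \emph{one-cube inequality}: for every $J\in D$ with offspring $J^1,\dots,J^{2^n}$,
$$
2^{-n}\sum_{v=1}^{2^n}B(S_{J^v},M_{J^v})-B(S_J,M_J)\ \ge\ \sqrt{2\bar M}\,\sqrt{(S_{J^1}-S_J)\Bigl(M_J-2^{-n}\sum_{v=1}^{2^n}M_{J^v}\Bigr)}.
$$

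To prove the one-cube inequality I would abbreviate $s=S_J$, $s'=S_{J^1}=\cdots=S_{J^{2^n}}$, $y=M_J$, $\bar y=2^{-n}\sum_vM_{J^v}$, so that $0\le s\le s'$ and $0\le\bar y\le y\le\bar M$ by (\ref{d001}). First, $B_{yy}\ge0$ makes $y\mapsto B(s',y)$ convex, and since the $S_{J^v}$ all equal $s'$, Jensen's inequality gives $2^{-n}\sum_vB(S_{J^v},M_{J^v})\ge B(s',\bar y)$; so it suffices to bound $B(s',\bar y)-B(s,y)$ from below. Passing through the corner $(s',y)$,
$$
B(s',\bar y)-B(s,y)=\bigl[B(s',y)-B(s,y)\bigr]+\bigl[B(s',\bar y)-B(s',y)\bigr],
$$
I would estimate each bracket by monotonicity of a derivative evaluated at $(s',y)$: because $B_{xx}\le0$ makes $B_x(\cdot,y)$ non-increasing, $B(s',y)-B(s,y)=\int_s^{s'}B_x(t,y)\,dt\ge(s'-s)B_x(s',y)$ (valid even at $s=0$, using continuity of $B$ up to $x=0$); because $B_{yy}\ge0$ makes $-B_y(s',\cdot)$ non-increasing, $B(s',\bar y)-B(s',y)=\int_{\bar y}^{y}\bigl(-B_y(s',t)\bigr)\,dt\ge(y-\bar y)\bigl(-B_y(s',y)\bigr)$. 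The structural signs force $B_x>0$ and $B_y<0$ on $(0,\infty)\times[0,\bar M]$: if $B_x$ took a negative value $-c<0$ at some $(x_0,y_0)$, then $B_x(\cdot,y_0)\le-c$ on $[x_0,\infty)$ by monotonicity, forcing $B(x,y_0)\to-\infty$ and contradicting $B\ge0$, so $B_x\ge0$, and then $-B_xB_y\ge\bar M/2>0$ promotes this to $B_x>0$ (hence $B_y<0$). Now both brackets are non-negative, so AM--GM together with $-B_xB_y\ge\bar M/2$ gives
$$
(s'-s)B_x(s',y)+(y-\bar y)\bigl(-B_y(s',y)\bigr)\ \ge\ 2\sqrt{(s'-s)(y-\bar y)}\,\sqrt{-B_x(s',y)B_y(s',y)}\ \ge\ \sqrt{2\bar M}\,\sqrt{(s'-s)(y-\bar y)},
$$
and since $S_{J^1}-S_J=s'-s$ and $M_J-2^{-n}\sum_vM_{J^v}=y-\bar y$, the one-cube inequality follows (the case $S_{J^1}=0$ being trivial, as then every term vanishes).

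To finish, I would multiply the one-cube inequality by $|J|$ and sum over $J\in\mathcal{D}_k$; the left side reorganizes into $T_{k+1}-T_k$, so $T_{k+1}-T_k\ge\sqrt{2\bar M}\sum_{J\in\mathcal{D}_k}|J|\sqrt{(S_{J^1}-S_J)(M_J-2^{-n}\sum_vM_{J^v})}$. Summing over $k=0,1,\dots,m-1$ — precisely the cubes with $|J|\ge2^{-n(m-1)}$ on the left of (\ref{d003}) — telescopes the left side to $T_m-T_0$, so that $T_m-T_0\ge\sqrt{2\bar M}\,\Sigma$, where $\Sigma$ is the whole sum on the left of (\ref{d003}). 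Finally $T_0=|P|\,B(S_P,M_P)\ge0$ because $B\ge0$, while $B(x,y)\le2\bar M\sqrt x$ gives $T_m\le2\bar M\sum_{J\in\mathcal{D}_m}|J|\sqrt{S_J}=2\bar M\cdot2^{-nm}\sum_{|J|=2^{-nm}}\sqrt{S_J}$. Hence $\Sigma\le(T_m-T_0)/\sqrt{2\bar M}\le T_m/\sqrt{2\bar M}\le2\bar M(2\bar M)^{-1/2}\,2^{-nm}\sum_{|J|=2^{-nm}}\sqrt{S_J}=\sqrt{2\bar M}\,2^{-nm}\sum_{|J|=2^{-nm}}\sqrt{S_J}$, which is exactly (\ref{d003}).

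The only step that is not bookkeeping is the one-cube inequality, and there the delicate choices are the base point $(S_{J^1},M_J)$ — the corner reached after raising $x$ and before lowering $y$, which is what makes the monotonicity coming from $B_{xx}\le0$ and that coming from $B_{yy}\ge0$ both point in the favorable direction — and the AM--GM step that converts $-B_xB_y\ge\bar M/2$ into the constant $\sqrt{2\bar M}$. A minor point to handle with care is the degenerate case $S_J=0$, where $B_x$ may be unbounded near $x=0$; the estimate $\int_0^{s'}B_x(t,y)\,dt\ge s'B_x(s',y)$ still holds because $B$ is continuous up to $x=0$ and $B_x(\cdot,y)$ is non-increasing. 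Note, finally, that the dimension enters only through the counting ``$2^n$ offspring, measure ratio $2^{-n}$'': the Jensen step is valid for any weights summing to one, so the same $B$ — with the requirements (\ref{d002}), unchanged from the one-dimensional lemma — and the same argument serve in every dimension, and Key Lemma 1 is literally the case $n=1$.
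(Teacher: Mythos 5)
Your proof is correct and follows essentially the same route as the paper's: the same one-cube, induction-by-scales inequality obtained by passing through the corner $(S_{J^1},M_J)$, with $B_{yy}\ge0$ supplying the Jensen step, $B_{xx}\le0$ and the sign of $B_x$ supplying the monotonicity estimates, AM--GM combined with $-B_xB_y\ge\bar M/2$ producing the constant $\sqrt{2\bar M}$, and telescoping against $0\le B\le2\bar M\sqrt x$. The only cosmetic differences are that you phrase the mean value estimates in integral form and spell out why $B_x\ge0$ (which the paper merely asserts), and that you prove the $n$-dimensional case directly rather than modifying the one-dimensional argument.
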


We prove Lemma 4.1 in the next section. Now, we introduce the appropriate analogs of (\ref{d2}) and (\ref{d3}). Fix a dyadic lattice $D$ on $\mathbb{R}^n.$ For $I\in D$ with side length $l(I),$ define $Q_I=I\times (0,l(I)].$ As before, let $TQ_I$ be the top half of $Q_I,$ i.e. $TQ_I=Q_I/\bigcup_{v=1}^{2^n}Q_{I^v}.$  Given $x\in \mathbb{R}^n,$ introduce the ``strange'' cones
$$
\Gamma_x^d=\bigcup_{I\ni x,I\in D}TQ_I.
$$
For $f\in L^1(\mathbb{R}^n),$ let $f(y,t)$ be its harmonic extension into $\mathbb{R}_+^{n+1}.$ We use an area-integral-like characterization of $H^1(\mathbb{R}^n)$
$$
\mathcal{H}^1(\mathbb{R}^n)=\left\{f\in L^1(\mathbb{R}^n):~\int_{\mathbb{R}^n}\left(\int_{\Gamma_x^d}
|\nabla f(y,t)|^2t^{1-n}\,dy\,dt\right)^{1/2}dx<\infty\right\}
$$
with the corresponding ``strange'' norm
\eq[d111]{
\|f\|_{\mathcal{H}^1}=\int_{\mathbb{R}^n}\left(\int_{\Gamma_x^d}
|\nabla f(y,t)|^2t^{1-n}\,dy\,dt\right)^{1/2}dx.
}
We also use a natural $\BMO(\mathbb{R}^n).$
$$
\BMO(\mathbb{R}^n)=\left\{\varphi\in L^1(\mathbb{R}^n):~\int_I |\nabla \varphi(y,t)|^2t\,dy\,dt\le C^2|I|,\forall I\in D\right\}.
$$
The best such $C$ is the corresponding $\BMO$ norm. In this notation we can state the following theorem 
\begin{theorem}
$\BMO(\mathbb{R}^n)\subset \left(\mathcal{H}^1(\mathbb{R}^n)\right)^*$ and the constant of embedding does not depend on dimension.
\end{theorem}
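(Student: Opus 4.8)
The plan is to mimic the proof of Theorem 3.1 essentially verbatim, replacing the one-dimensional dyadic intervals on $\mathbb{T}$ with the $n$-dimensional dyadic cubes on $\mathbb{R}^n$, and replacing Key Lemma 1 by Key Lemma 2. Fix $\varphi\in\BMO(\mathbb{R}^n)$ and $f\in\mathcal{H}^1(\mathbb{R}^n)$. For each cube $J\in D$ set
$$
M_J=\frac1{|J|}\int_{Q_J}|\nabla\varphi(y,t)|^2t\,dy\,dt,\qquad
S_J=\int_{\Gamma^d_J}|\nabla f(y,t)|^2t^{1-n}\,dy\,dt=\sum_{I\supsetneq J}\int_{TQ_I}|\nabla f(y,t)|^2t^{1-n}\,dy\,dt.
$$
Then $0\le M_J\le\bar M:=\|\varphi\|_{\BMO}^2$, the decomposition $Q_J=\bigcup_{I\subset J}TQ_I$ gives $M_J-2^{-n}\sum_v M_{J^v}=\frac1{|J|}\int_{TQ_J}|\nabla\varphi|^2t\,dy\,dt\ge0$, and $S_{J^1}=\dots=S_{J^{2^n}}\ge S_J$ with $S_{J^1}-S_J=\int_{TQ_J}|\nabla f|^2t^{1-n}\,dy\,dt$. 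So the hypotheses (\ref{d001}) of Key Lemma 2 hold, and the function $B$ from Key Lemma 1 still satisfies (\ref{d002}) since those conditions are dimension-independent.

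Applying (\ref{d003}) and letting $m\to\infty$, the left-hand side converges (by monotone/dominated convergence, using $f\in\mathcal{H}^1$) to
$$
\sum_{J\in D}|J|^{1/2}\Bigl(\int_{TQ_J}|\nabla\varphi|^2t\,dy\,dt\Bigr)^{1/2}\Bigl(\int_{TQ_J}|\nabla f|^2t^{1-n}\,dy\,dt\Bigr)^{1/2}.
$$
Bounding this below by Cauchy–Schwarz inside each $TQ_J$ as in the proof of Theorem 3.1, and using that on $TQ_J$ one has $t\sim l(J)$, $|J|=l(J)^n$, hence $|J|^{1/2}\,(t\cdot t^{1-n})^{1/2}=|J|^{1/2}t^{1-n/2}\gtrsim l(J)^{n/2}\cdot l(J)^{1-n/2}=l(J)=|J|^{1/n}\cdot(\text{const})$ — more precisely $|J|^{1/2}t^{1-n/2}\ge c\,t$ on $TQ_J$ — we obtain a lower bound $\ge c\int_{\mathbb{R}^{n+1}_+}|\nabla\varphi||\nabla f|\,t\,dy\,dt$ (using $\bigcup_J TQ_J=\mathbb{R}^{n+1}_+$). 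On the right-hand side, $2^{-nm}\sum_{|J|=2^{-nm}}\sqrt{S_J}$ is a Riemann sum converging to $\int_{\mathbb{R}^n}(\int_{\Gamma^d_x}|\nabla f|^2t^{1-n}dy\,dt)^{1/2}dx=\|f\|_{\mathcal{H}^1}$, so we get $\int_{\mathbb{R}^{n+1}_+}|\nabla\varphi||\nabla f|\,t\,dy\,dt\le C\|\varphi\|_{\BMO}\|f\|_{\mathcal{H}^1}$ with $C$ independent of $n$.

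The remaining and genuinely new step — the place where the one-dimensional argument does not transcribe directly — is converting the solid integral $\int_{\mathbb{R}^{n+1}_+}|\nabla\varphi||\nabla f|\,t\,dy\,dt$ into the boundary pairing $|\int_{\mathbb{R}^n}\varphi(x)f(x)\,dx|$. In dimension one this used the Cauchy–Riemann identity $\partial\varphi\,\bar\partial\bar f=\tfrac14\Delta(\varphi\bar f)$ together with Green's formula and $\varphi(0)=0$. In $\mathbb{R}^{n+1}_+$ the analogue is the polarization identity $\nabla\varphi\cdot\nabla f=\tfrac12\Delta(\varphi f)$ valid for harmonic $\varphi,f$ (so $\Delta(\varphi f)=2\nabla\varphi\cdot\nabla f$ and in particular $|\nabla\varphi\cdot\nabla f|\le|\nabla\varphi||\nabla f|$), combined with the Green-type identity $\int_{\mathbb{R}^n}F(x,0)\,dx - (\text{behavior at }\infty)=c_n\int_{\mathbb{R}^{n+1}_+}\Delta F(y,t)\,t\,dy\,dt$ for the half-space with the right weight $t$. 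One must be slightly careful: $\nabla\varphi\cdot\nabla f$ is not $\tfrac14\Delta(\varphi f)$ unless one separates the real pairing, and the decay at infinity needed to kill boundary terms should be arranged by a density argument (it suffices to prove (\ref{d5})-type inequality for a dense class of nice $f$, e.g. with compactly supported boundary data, and the $\BMO$ normalization plays the role of $\varphi(0)=0$). Modulo this identification, which I expect to be the main obstacle but a purely computational one, the proof is complete.
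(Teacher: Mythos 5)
Your proposal is correct and follows essentially the same route as the paper: the same choices of $S_J$ and $M_J$, the same application of Key Lemma 2 with the dimension-free bound $\sqrt{2\bar M}$, the same pointwise inequality $|J|^{1/2}t^{1-n/2}\ge t$ on $TQ_J$, and the same conversion of $\int_{\mathbb{R}^{n+1}_+}\nabla\varphi\cdot\nabla f\,t\,dy\,dt$ into $\tfrac12\int_{\mathbb{R}^n}\varphi f$ via $\Delta(\varphi f)=2\nabla\varphi\cdot\nabla f$ and the weighted Green/integration-by-parts identity (which the paper states in one line, working with compactly supported data just as you suggest).
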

\begin{proof}
The proof closely parallels that of Theorem 3.1. For $J\in D,$ let $\Gamma^d_J=\bigcup_{I\supsetneq J}TQ_J.$ Setting
$$
S_J=\int_{\Gamma^d_J}|\nabla f(y,t)|^2t^{1-n}dy\,dt,~~~M_J=\frac1{|J|}\int_J|\nabla\varphi(y,t)|^2t\,dy\,dt
$$
and using Lemma 4.1 (for $f,\varphi$ with finite support, since the lemma works with a cube-based lattice), we get
\begin{align*}
\sum_{|J|\ge 2^{-n(m-1)}}|J|&\left(\frac1{|J|}\int_{TQ_J}|\nabla\varphi(y,t)|^2t\,dy\,dt\right)^{1/2}
\left(\int_{TQ_J}|\nabla f(y,t)|^2t^{1-n}dy\,dt\right)^{1/2}\\
&\le\sqrt{2\bar{M}}~2^{-nm}\sum_{|J|=2^{{-nm}}}\left(\int_{\Gamma^d_J}|\nabla f(y,t)|^2t^{1-n}dy\,dt\right)^{1/2}.
\end{align*}
The right-hand side goes to $\sqrt2\,\|\varphi\|_{\BMO}\|f\|_{\mathcal{H}^1}$ as $m\to\infty.$ On the left we have
\begin{align*}
\sum_{J\in D}|J|&\left(\frac1{|J|}\int_{TQ_J}|\nabla\varphi(y,t)|^2t\,dy\,dt\right)^{1/2}
\left(\int_{TQ_J}|\nabla f(y,t)|^2t^{1-n}dy\,dt\right)^{1/2}\\
&\ge\sum_{J\in D}|J|^{1/2}\int_{TQ_J}|\nabla\varphi(y,t)||\nabla f(y,t)|t^{1-n/2}\,dy\,dt\\
&\ge\int_{\mathbb{R}^{n+1}_+}|\nabla\varphi(y,t)||\nabla f(y,t)|t\,dy\,dt\\
&\ge|\int_{\mathbb{R}^{n+1}_+}(\nabla\varphi(y,t))^T(\nabla f(y,t))t\,dy\,dt|.
\end{align*}
since $t^{-n/2}\ge |J|^{-1/2}$ when $t\in TQ_J.$ Integration by parts yields
$$
\left|\int_{\mathbb{R}^{n+1}_+}(\nabla\varphi(y,t))^T(\nabla f(y,t))t\,dy\,dt\right|=
\frac12\left|\int_{\mathbb{R}^n}\varphi(y)f(y)\,dy\right|.
$$ 
Putting the left-hand and right-hand estimates together, we get
\eq[d004]{
\left|\int_{\mathbb{R}^n}\varphi(y)f(y)\,dy\right|\le2\sqrt2\,\|\varphi\|_{\BMO}\|f\|_{\mathcal{H}^1}.
}
\end{proof}
\begin{remark}
The fact that the constant in (\ref{d004}) does not depend on dimension is due to the ``dimensional'' choice of the norm (\ref{d111}). Indeed, for each $n$ there exists an aperture $a=a(n)$ such that $\Gamma_x^d\subset\Gamma_x^a\stackrel{def}{=}\{(y,t)\in\mathbb{R}^n\times\mathbb{R}^+:|y-x|<at\}.$ However, $a(n)\to\infty$ as $n\to\infty.$
\end{remark}
\section{The proof of the key lemmas}
\begin{proof}
We first establish the one-dimensional result and then briefly discuss the changes needed in the higher-dimensional situation.

Fix $J\in D.$ Let $S=S_J;~S_0=S_{J_-}=S_{J_+};~M=M_J;~M_-=M_{J_-};~M_+=M_{J_+}.$ Assume for the moment that $S\ne0.$ Then
\begin{align*}
&\frac12B(S_{J_-},M_{J_-})+\frac12B(S_{J_+},M_{J_+})\\
&=\frac12B(S_0,M_-)+\frac12B(S_0,M_+)-B(S_0,M)+B(S_0,M)-B(S,M)+B(S,M)\\
&\ge
B\left(S_0,\frac12(M_-+M_+)\right)-B(S_0,M)+\frac{\pp B}{\p S}(\hat{S},M)(S_0-S)+B(S,M)\\
&=-\frac{\pp B}{\p M}(S_0,\hat{M})\left(M-\frac12(M_-+M_+)\right)+\frac{\pp B}{\p S}(\hat{S},M)(S_0-S)+B(S,M),\\*
\intertext{
for some $\hat{S}\in (S,S_0)$ and $\hat{M}\in \left(\frac12(M_-+M_+),M\right).$ Since 
$B_{SS}\le 0,$ we have $B_S(\hat{S},M)\ge B_S(S_0,M)$ and 
since $B_{MM}\ge 0,$ we have 
$-B_M(S_0,\hat{M})\ge -B_M(S_0,M).$ Also, the first and third conditions (\ref{d02}) imply that 
$B_S(S_0,\hat{M})\ge0$ and thus, by the second condition, 
$-B_M(S_0,\hat{M})\ge0.$ We continue}
&\ge
-\frac{\pp B}{\p M}(S_0,M)\left(M-\frac12(M_-+M_+)\right)+\frac{\pp B}{\p S}(S_0,M)(S_0-S)+B(S,M)\\
&\ge
2\sqrt{-\frac{\pp B}{\p M}(S_0,M)\,\frac{\pp B}{\p S}(S_0,M)}\sqrt{\left(M-\frac12(M_-+M_+)\right)(S_0-S)}+B(S,M).
\end{align*}
Using the second condition, we get
\begin{align}
\label{al}
\notag\frac12B(S_{J_-},M_{J_-})&+\frac12B(S_{J_+},M_{J_+})\\
&\ge\sqrt{2\bar{M}}\sqrt{\left(M-\frac12(M_-+M_+)\right)(S_0-S)}+B(S,M).
\end{align}
If $S=0,S_0\ne0,$ the argument works with minor corrections, because $B_M$ is continuous at $(0,M).$ If $S=S_0=0,$ (\ref{al}) is trivially true. Now,
\begin{align*}
2^{-n}
&\sum_
{\substack{ 
J\in D\\
|J|=2^{{-n}} 
}}
\frac12B(S_J,M_J)\\
&=2^{-n}
\!\!\!\!\!\!\sum_
{\substack{ 
J\in D\\
|J|=2^{{-n+1}} 
}}
\left[\frac12B(S_{J_-},M_{J_-})+\frac12B(S_{J_+},M_{J_+})\right]\\
&\ge 2^{-n}\!\!\!\!\!\!\sum_
{\substack{ 
J\in D\\
|J|=2^{{-n+1}} 
}}
\left[\sqrt{2\bar{M}}\sqrt{(S_{J_+}-S_J)\left(M_J-\frac12(M_{J_-}+M_{J_+})\right)}+B(S_J,M_J)\right]\\
&=2^{-n}F_n+2^{-n+1}
\!\!\!\!\!\!\sum_
{\substack{ 
J\in D\\
|J|=2^{-n+2} 
}}
\left[\frac12B(S_{J_-},M_{J_-})+\frac12B(S_{J_+},M_{J_+})\right]\\
&\ge 2^{-n}F_n+2^{-n+1}F_{n-1}+2^{-n+1}
\!\!\!\!\!\!\sum_
{\substack{ 
J\in D\\
|J|=2^{-n+2} 
}}
B(S_J,M_J)\\
&\ge\dots\ge\sum_{k=1}^n2^{-k}F_k+\frac12 B(S_{I_0},M_{I_0}),
\end{align*}
where we have set
$$
F_k=
\sum_
{\substack{ 
J\in D\\
|J|=2^{-k+1} 
}}
\sqrt{2\bar{M}}\sqrt{(S_{J_+}-S_J)\left(M_J-\frac12(M_{J_-}+M_{J_+})\right)}.
$$
Using the fact that $B(S_J,M_J)\le 2\bar{M}\sqrt{S_J},\forall J\in D$ and $B(S_{I_0},M_{I_0})\ge 0,$ we get
$$
2^{-n}\!\!\!\!
\sum_
{\substack{ 
J\in D\\
|J|=2^{{-n}} 
}}
\bar{M}\sqrt{S_J}\ge\frac12\sum_{k=1}^n 2^{-k+1}\!\!\!\!\!\!
\sum_
{\substack{ 
J\in D\\
|J|=2^{-k+1} 
}}
\sqrt{2\bar{M}}\sqrt{(S_{J_+}-S_J)\left(M_J-\frac12(M_{J_-}+M_{J_+})\right)},
$$
thus proving the lemma.
\end{proof}
In the case of $D=D_P$ for a cube $P\subset\mathbb{R}^n,$ we observe that since $B_{yy}\ge0,$ we have
$$
2^{-n}\sum_{v=1}^{2^n}B(S_0,M_{J^v})\ge B\left(S_0,2^{-n}\sum_{v=1}^{2^n}M_{J^v}\right)
$$
and so
\begin{align*}
2^{-n}&\sum_{v=1}^{2^n}B(S_0,M_{J^v})\\
&\ge\sqrt{2\bar{M}}\sqrt{\left(M-2^{-n}\sum_{v=1}^{2^n}M_{J^v}\right)(S_0-S)}+B(S,M).
\end{align*}
Setting
$$
F_k=
\sum_
{\substack{ 
J\in D\\
|J|=2^{-n(k-1)} 
}}
\sqrt{2\bar{M}}\sqrt{(S_{J^1}-S_J)\left(M_J-2^{-n}\sum_{v=1}^{2^n}M_{J^v}\right)},
$$
we then get, for a positive integer $m,$
$$
2^{-nm}
\sum_
{\substack{ 
J\in D\\
|J|=2^{{-nm}} 
}}
\frac1{2^n}B(S_J,M_J)\ge\sum_{k=1}^m 2^{-nk}F_k+\frac1{2^n}B(S_P,M_P),
$$
which yields (\ref{d003}).
\section{A sample Bellman function}
To finish the proofs, we need a function $B$ such that 
$$
0\le B\le C_1\sqrt x,\quad
\frac{\partial^2B}{\partial x^2}\le0,\quad
\frac{\partial^2B}{\partial y^2}\ge0,\quad
-\frac{\partial B}{\partial y}\,\frac{\partial B}{\partial x}\ge C_2,\quad
B(0,y)=0,
$$
for some positive constants $C_1, C_2.$ The proof of the key lemma using $B$ suggests that we want to choose these constants in order to minimize the ratio $C_1/\sqrt{C_2}.$ To make the estimates ``sharper,'' we require that
$
B_{yy}=0.
$
(Because of the first condition, we cannot require equality in $B_{xx}\le0$.) This means that $B$ is a linear function of $y.$ Furthermore, $B_y$ must be negative. Because of the first condition and the homogeneity in the way $B$ is used in the key lemmas, we seek $B$ in the form
\eq[s1]{
B(x,y)=\sqrt x(A-y).
}
Therefore,
$$
-\frac{\partial B}{\partial x}\,\frac{\partial B}{\partial y}=\frac{A-y}2\ge\frac{A-\bar{M}}2,
$$
since $y\le\bar{M}.$ We have $C_1=A$ and for the ratio to be minimized 
$$
\frac{C_1}{\sqrt{C_2}}=\frac{\sqrt2A}{\sqrt{A-\bar{M}}}.
$$
The minimum of this ratio is attained at $A=2\bar{M},$ thus producing the function
\eq[s2]{
B(x,y)=\sqrt x(2\bar{M}-y),
}
satisfying conditions (\ref{d03}).
\begin{remark}
While the function (\ref{s2}) is the best function of the form (\ref{s1}), it is unlikely that the explicit constant in (\ref{d03}) is sharp. In the proof of Lemma 1.1, the inequalities $B\le 2\bar{M}\sqrt{S}$ and $-B_xB_y\ge\bar{M}/2$ are used simultaneously. But the former becomes an equality when $M=0,$ while the latter -- when $M=\bar{M}.$ 
\end{remark}
\section*{Conclusion}
The proofs we have presented are elementary and short, demonstrating yet again the efficiency of the Bellman-function-type approach. To obtain sharp results even in the one-dimensional case, however, one ideally would want to pose an extremal problem (actually two different extremal problems, a dyadic and a continuous one) and compute the corresponding Bellman functions. The proper choice of variables (the starting point in the Bellman formalism) is far from clear. Is is possible that sharp constants can be obtained by finding a different (more complex) function satisfying conditions (\ref{s1}). Alternatively, the proof of the key lemma can be modified resulting in a different set of conditions needed. In either case, to prove that the resulting constant is sharp, one needs to consider the exact function $B$ used to establish the lemma. Then one needs to come up with a pair of functions ($\varphi\in \BMO,f\in\tl$ in the dyadic case) or a pair of sequences thereof, such that the induction-by-scales chain of inequalities in the proof of the key lemma becomes a chain of equalities (or asymptotic equalities), when used in conjunction with $B.$  
\bibliographystyle{amsalpha}

\end{document}